\numberwithin{equation}{section}
\newtheorem{lem}[equation]{Lemma}
\newtheorem{thm}[equation]{Theorem}
\theoremstyle{definition}
\def\N{\mathbb N}
\def\R{\mathbb R}
\def\Q{\mathbb Q}
\def\Z{\mathbb Z}
\def\ve{\varepsilon}
\def\vf{\varphi}
\newcommand{\supp}{\operatorname{supp}}
\begin{document}


\title[Bottom of spectrum]
{On the bottom of spectra under coverings}
\author{Werner Ballmann}
\address
{WB: Max Planck Institute for Mathematics,
Vivatsgasse 7, 53111 Bonn and
Hausdorff Center for Mathematics,
Endenicher Allee 60, 53115 Bonn.}
\email{hwbllmnn\@@mpim-bonn.mpg.de}
\author{Henrik Matthiesen}
\address
{HM: Max Planck Institute for Mathematics,
Vivatsgasse 7, 53111 Bonn and
Hausdorff Center for Mathematics,
Endenicher Allee 60, 53115 Bonn.}
\email{hematt\@@mpim-bonn.mpg.de}
\author{Panagiotis Polymerakis}
\address{PP: Institut f{\"u}r Mathematik,
Humboldt-Universit\"at zu Berlin,
Unter den Linden 6, 10099 Berlin.}
\email{polymerp@hu-berlin.de}

\thanks{We would like to thank the Max Planck Institute for Mathematics
and the Hausdorff Center for Mathematics in Bonn for their support.}

\date{\today}

\subjclass[2010]{58J50, 35P15, 53C99}
\keywords{Bottom of spectrum, amenable covering}

\begin{abstract}
For a Riemannian covering $M_1\to M_0$ of connected Riemannian manifolds
with respective fundamental groups $\Gamma_1\subseteq\Gamma_0$,
we show that the bottoms of the spectra of $M_0$ and $M_1$ coincide
if the right action of $\Gamma_0$ on $\Gamma_1\backslash\Gamma_0$ is amenable.
\end{abstract}

\maketitle

\section{Introduction}
\label{intro}

In this article, we study the behaviour under coverings
of the bottom of the spectrum of Schr\"odinger operators on Riemannian manifolds.

Let $M$ be a connected Riemannian manifold, not necessarily complete,
and $V\colon M\to\R$ be a smooth potential with associated \emph{Schr\"odinger operator} $\Delta+V$.
We consider $\Delta+V$ as an unbounded symmetric operator in the space $L^2(M)$ of square integrable functions on $M$ 
with domain $C^\infty_c(M)$, the space of smooth functions on $M$ with compact support.

For a non-vanishing Lipschitz continuous function on $M$ with compact support in $M$, we call
\begin{equation}\label{rayl}
  R(f) = \frac{\int_M ( |\nabla f|^2 + Vf^2 )}{\int_M f^2}
\end{equation}
the \emph{Rayleigh quotient of $f$}.
We let
\begin{equation}\label{botto}
   \lambda_0(M,V) = \inf R(f),
\end{equation}
where $f$ runs through all non-vanishing Lipschitz continuous functions on $M$ with compact support in $M$.
If $\lambda_0(M,V)>-\infty$, then $\Delta+V$ is bounded from below on $C^\infty_c(M)$
and $\lambda_0(M,V)$ is equal to the bottom of the spectrum of the \emph{Friedrichs extension of $\Delta+V$}.
If $\lambda_0(M,V)=-\infty$,
then the spectrum of any self-adjoint extension of $\Delta+V$ is not bounded from below.

Recall that $\Delta+V$ is essentially self-adjoint on $C^\infty_c(M)$ if $M$ is complete and $\inf V>-\infty$.
Then the unique self-adjoint extension of $\Delta+V$ is its closure.
In the case where $M$ is the interior of a complete Riemannian manifold $N$ with smooth boundary
and where $V$ extends smoothly to the boundary of $N$,
$\lambda_0(M,V)$ is equal to the bottom of the Dirichlet spectrum of $\Delta+V$ on $N$.

In the case of the \emph{Laplacian}, that is, $V=0$, we also write $\lambda_0(M)$
and call it the \emph{bottom of the spectrum of $M$}.
It is well known that $\lambda_0(M)$ is the supremum over all $\lambda\in\R$
such that there is a positive smooth $\lambda$-eigenfunction $f\colon M\to\R$
(see, e.g., \cite[Theorem 7]{CY}, \cite[Theorem 1]{FS}, or \cite[Theorem 2.1]{Su}.)
It is crucial that these eigenfunctions are not required to be square-integrable.
In fact, $\lambda_0(M)$ is exactly the border between the positive and the $L^2$ spectrum of $\Delta$
(see, e.g., \cite[Theorem 2.2]{Su}).

Suppose now that $M$ is simply connected
and let $\pi_0\colon M\to M_0$ and $\pi_1\colon M\to M_1$ be Riemannian subcovers of $M$.
Let $\Gamma_0$ and $\Gamma_1$ be the groups of covering transformations of $\pi_0$ and $\pi_1$, respectively, and assume that $\Gamma_1\subseteq\Gamma_0$.
Then the resulting Riemannian covering $\pi\colon M_1\to M_0$ satisfies $\pi\circ\pi_1=\pi_0$.
Let $V_0\colon M_0\to\R$ be a smooth potential and set $V_1=V_0\circ\pi$.

Since the lift of a positive $\lambda$-eigenfunction of $\Delta$ on $M_0$ to $M_1$ is a positive $\lambda$-eigenfunction of $\Delta$,
we always have $\lambda_0(M_0)\le\lambda_0(M_1)$ by the above characterization of the bottom of the spectrum of $\Delta$ by positive eigenfunctions.
In \cref{secpush},
we present a short and elementary proof of the inequality
which does not rely on the characterization of $\lambda_0$ by positive eigenfunctions:

\begin{thm}\label{down}
For any Riemannian covering $\pi\colon M_1\to M_0$ as above,
\begin{equation*}
  \lambda_0(M_0,V_0)\le\lambda_0(M_1,V_1).
\end{equation*}
\end{thm}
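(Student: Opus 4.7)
The plan is to construct, for every compactly supported non-vanishing Lipschitz function $f_1$ on $M_1$, a compactly supported test function on $M_0$ whose Rayleigh quotient with respect to $V_0$ is at most the Rayleigh quotient of $f_1$ with respect to $V_1$; passing to the infimum over $f_1$ then yields $\lambda_0(M_0,V_0)\le\lambda_0(M_1,V_1)$. The natural candidate is the fiberwise $\ell^2$-aggregate
\[
  f_0(x) = \Bigl(\sum_{y\in\pi^{-1}(x)} f_1(y)^2\Bigr)^{1/2}, \qquad x\in M_0.
\]
Since $\supp f_1$ is compact in $M_1$ and $\pi$ is a covering, the sum is locally finite: on every evenly covered neighborhood $U\subseteq M_0$ with sheets $\{U_\alpha\}$, only finitely many sheets meet $\supp f_1$, so over $U$ the function $f_0^2$ is a finite sum of isometric pullbacks of $f_1^2$, hence Lipschitz on $U$, with global compact support $\pi(\supp f_1)$.

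Because $\pi$ is a local isometry and $V_1=V_0\circ\pi$, integrating sheet by sheet over a locally finite cover of $M_0$ by evenly covered neighborhoods yields the identities
\[
  \int_{M_0} f_0^2 = \int_{M_1} f_1^2, \qquad \int_{M_0} V_0\, f_0^2 = \int_{M_1} V_1\, f_1^2.
\]
At points where $f_0>0$, the chain rule and the Cauchy--Schwarz inequality applied to the local finite sum give
\[
  |\nabla f_0|^2 = \frac{|\nabla(f_0^2)|^2}{4 f_0^2} = \frac{\bigl|\sum_{y\in\pi^{-1}(x)} f_1(y)\,\nabla f_1(y)\bigr|^2}{f_0(x)^2} \le \sum_{y\in\pi^{-1}(x)} |\nabla f_1(y)|^2,
\]
where the gradients on the right are transported to $T_x M_0$ via the local isometries. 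Integrating sheet by sheet once more gives $\int_{M_0}|\nabla f_0|^2\le\int_{M_1}|\nabla f_1|^2$. Combined with the two identities above, this would give $R(f_0)\le R(f_1)$ if $f_0$ were an admissible test function.

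The main obstacle is precisely this regularity issue: although $f_0^2$ is Lipschitz, its square root $f_0$ may fail to be Lipschitz at points where an entire fiber of $\pi$ lies in $\{f_1=0\}$. I would circumvent this by replacing $f_0$ with the Lipschitz regularizations $g_\ve := \sqrt{f_0^2+\ve^2}-\ve$, which are Lipschitz (smooth Lipschitz functions of the Lipschitz function $f_0^2$), share the compact support $\supp f_0$, and satisfy $|\nabla g_\ve|^2 = \frac{f_0^2}{f_0^2+\ve^2}|\nabla f_0|^2\le|\nabla f_0|^2$ almost everywhere. As $\ve\downarrow 0$, one has $g_\ve\nearrow f_0$ pointwise, so by dominated convergence $\int g_\ve^2\to\int f_0^2$ and $\int V_0 g_\ve^2\to\int V_0 f_0^2$, whence $\limsup_{\ve\to 0}R(g_\ve)\le R(f_1)$. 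Thus $\lambda_0(M_0,V_0)\le R(f_1)$ for every admissible $f_1$, and taking the infimum over $f_1$ completes the proof.
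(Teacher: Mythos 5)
Your construction is exactly the paper's: the fiberwise $\ell^2$-aggregate $f_0$, the local-finiteness argument via evenly covered neighborhoods, the Cauchy--Schwarz bound on $|\nabla f_0|$, and the three integral comparisons all coincide with what the paper does in Section \ref{secpush}. Where you diverge is in believing there is a regularity obstacle to using $f_0$ itself as a test function --- but there is none, because $f_0$ is already Lipschitz. Over an evenly covered neighborhood you have $f_0=\bigl(\sum_{i=1}^N g_i^2\bigr)^{1/2}=\|(g_1,\dots,g_N)\|$, where each $g_i$ is an isometric pullback of $f_1$ along a sheet, and the reverse triangle inequality for the Euclidean norm gives
\[
  |f_0(x)-f_0(x')|\ \le\ \Bigl(\sum_i |g_i(x)-g_i(x')|^2\Bigr)^{1/2}\ \le\ \sqrt{N}\,L\,d(x,x'),
\]
with $L$ the Lipschitz constant of $f_1$ and $N$ uniformly bounded on the compact set $\supp f_0$ (cover $\supp f_1$ by finitely many sheets; each fiber meets each sheet at most once). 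The worry you seem to have in mind --- that the square root of a non-negative Lipschitz function, such as $\sqrt{|t|}$, need not be Lipschitz at its zeros --- does not arise here, because $f_0^2$ is not an arbitrary non-negative Lipschitz function but a locally finite sum of squares of Lipschitz functions. The paper exploits this directly: it notes $f_0$ is a.e.\ differentiable with vanishing gradient on $\{f_0=0\}$ and concludes $R(f_0)\le R(f_1)$ without regularization. Your detour via $g_\ve=\sqrt{f_0^2+\ve^2}-\ve$ and dominated convergence is correct as written (in particular $\supp g_\ve\subseteq\supp f_0$ and $|\nabla g_\ve|\le|\nabla f_0|$ a.e.), but it is solving a problem that is not actually there.
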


Brooks showed in \cite[Theorem 1]{Br2} that $\lambda_0(M_0)=\lambda_0(M_1)$ in the case where $M_0$ is complete, has \emph{finite topological type},
and $\pi$ is \emph{normal} with \emph{amenable} group $\Gamma_1\backslash\Gamma_0$ of covering transformations.
B\'erard and Castillon extended this in \cite[Theorem 1.1]{BC} to
$\lambda_0(M_0,V_0)=\lambda_0(M_1,V_1)$
in the case where $M_0$ is complete,
$\pi_1(M_0)$ is finitely generated (this assumption occurs in point (1) of their Section 3.1),
and the right action of $\Gamma_0$ on $\Gamma_1\backslash\Gamma_0$ is amenable.
We generalize these results as follows:

\begin{thm}\label{up}
If the right action of $\Gamma_0$ on $\Gamma_1\backslash\Gamma_0$ is amenable,
then \[\lambda_0(M_0,V_0)=\lambda_0(M_1,V_1).\]
\end{thm}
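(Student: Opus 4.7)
The plan is to establish the reverse inequality $\lambda_0(M_1,V_1)\le\lambda_0(M_0,V_0)$; combined with \cref{down} this gives the theorem. Fix $\varepsilon>0$ and a Lipschitz, compactly supported $u\colon M_0\to\R$ with $R(u)\le\lambda_0(M_0,V_0)+\varepsilon$, and lift $u$ to the $\Gamma_0$-invariant function $\tilde u:=u\circ\pi_0$ on the universal cover $M$ (which is not compactly supported). The idea is to build test functions $f_{\phi_n}=\chi_{\phi_n}\cdot\tilde u$ on $M_1$, where $\chi_{\phi_n}$ arises from a Reiter-type sequence for the amenable right action of $\Gamma_0$ on $\Gamma_1\backslash\Gamma_0$.

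Fix a fundamental domain $D\subset M$ for $\Gamma_0$ and a Lipschitz, $\Gamma_0$-equivariant partition of unity $\sigma_g(x):=\sigma(g^{-1}x)$ with $\sum_g\sigma_g\equiv1$ and $\supp\sigma$ meeting only finitely many of its $\Gamma_0$-translates; such $\sigma$ exists by proper discontinuity of the $\Gamma_0$-action. For $\phi\ge0$ of finite support in $\Gamma_1\backslash\Gamma_0$ with $\|\phi\|_{\ell^2}=1$, put
\[
\chi_\phi(x):=\sum_{g\in\Gamma_0}\phi([g])\sigma_g(x).
\]
Since $[\gamma g]=[g]$ for $\gamma\in\Gamma_1$, the function $\chi_\phi$ is $\Gamma_1$-invariant on $M$, so it descends to a Lipschitz function with compact support on $M_1$; so does $f_\phi:=\chi_\phi\cdot\tilde u$, which is an admissible test function.

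Take $\phi_n$ to be an $\ell^2$-Reiter sequence for the right action: $\varepsilon_n:=\max_{s\in S}\|R_s\phi_n-\phi_n\|_{\ell^2}\to0$, where $S\subset\Gamma_0$ is any prescribed finite subset; we take $S$ to contain the adjacency set determined by $\sigma$ together with a compact lift of $\supp u$ into $D$. Integrating over the fundamental domain $D_1=\bigsqcup_{[g]}gD$ for $\Gamma_1$ in $M$ and substituting $x=gy$, one checks that $[g]\mapsto\chi_{\phi_n}(gy)$ equals the convex combination $\sum_k\sigma(k^{-1}y)R_k\phi_n$ in $\ell^2(\Gamma_1\backslash\Gamma_0)$, whose norm differs from $\|\phi_n\|_{\ell^2}=1$ by $O(\varepsilon_n)$ by the Reiter property. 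This gives $\int_{M_1}f_{\phi_n}^2=(1+O(\varepsilon_n))\|u\|_{L^2}^2$, and the same substitution yields analogous asymptotics for $\int\chi_{\phi_n}^2|\nabla\tilde u|^2$ and $\int V_1\chi_{\phi_n}^2\tilde u^2$. The gradient-of-cutoff contribution is controlled via the identity $\nabla\chi_\phi(x)=\sum_k(R_k\phi-\phi)([g])\,\nabla\sigma_{gk}(x)$ on $gD$ (using $\sum_k\nabla\sigma_{gk}\equiv0$) together with Cauchy--Schwarz, giving $\int\tilde u^2|\nabla\chi_{\phi_n}|^2=O(\varepsilon_n^2)$; the remaining cross term in the expansion of $|\nabla f_{\phi_n}|^2$ is then $O(\varepsilon_n)$ by another Cauchy--Schwarz. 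Therefore $R(f_{\phi_n})=R(u)+O(\varepsilon_n)$, and letting $n\to\infty$ and then $\varepsilon\to0$ proves the theorem.

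I expect the main technical obstacle to be arranging the $\Gamma_0$-equivariant partition of unity so that the adjacency set $S$ is finite even when $\Gamma_0$ is not finitely generated and $M_0$ is not of finite topological type. This needs only the proper discontinuity of the $\Gamma_0$-action on $M$ together with compactness of $\supp u$; it is precisely this local-to-global reduction that lets the argument dispense with the hypotheses present in the theorems of Brooks and of B\'erard--Castillon.
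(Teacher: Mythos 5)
Your proposal is correct, and it is a genuinely different route from the paper's. The paper also reduces to showing $\lambda_0(M_1,V_1)\le R(f)+\delta$ for compactly supported Lipschitz $f$ on $M_0$, but it works entirely on $M_1$: it fixes a complete background metric $h$ on $M_0$, builds fundamental domains $D_y$ indexed by the preimages $y\in\pi^{-1}(x)$ of a basepoint, constructs a partition of unity from the distance functions $d(\cdot,y)$, lets $\chi$ be the sum of the $\vf_{\pi_1(gu)}$ over $g$ in a F\o{}lner set $F$, and then estimates $R(\chi f_1)$ by a counting argument: it splits the relevant fundamental domains into a ``good'' set $Q_+$ where $\chi\equiv1$ on $\pi^{-1}(\supp f)\cap D_y$ and a ``bad'' set $Q_-$ where $0<\chi<1$ somewhere, and uses the F\o{}lner inequality to show $|Q_-|$ is a small fraction of $|Q_+|$. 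Your version instead works on the universal cover with a $\Gamma_0$-equivariant partition of unity $\sigma_g=\sigma(g^{-1}\cdot)$ and an $\ell^2$-Reiter sequence $\phi_n$, and replaces the good/bad counting by Cauchy--Schwarz together with the identity $\nabla\chi_\phi=\sum_k(R_k\phi-\phi)([g])\nabla\sigma_{gk}$ on $gD$; the latter is what makes the cutoff gradient term $O(\varepsilon_n^2)$ in one line. The two approaches are equivalent in content (normalized indicators of F\o{}lner sets give Reiter sequences), but yours is algebraically cleaner, trading the geometric counting of fundamental domains for $\ell^2$ estimates, while the paper's is more elementary and does not invoke norms on $\ell^2(\Gamma_1\backslash\Gamma_0)$.

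The technical obstacle you flag is real but surmountable exactly as you say, and is the same localization that the paper performs. Since $\supp u$ is compact, choose a compact $L\subseteq M$ with $\pi_0(L)\supseteq\supp u$ and a slightly larger compact $L'$; take a Lipschitz bump $\rho\ge0$ supported in $L'$ and positive on $L$, and a $\Gamma_0$-invariant Lipschitz cutoff $\eta$ equal to $1$ on $\pi_0^{-1}(\supp u)$ and supported in $\pi_0^{-1}(K^+)$ for a compact neighbourhood $K^+$ of $\supp u$; then $\sigma:=\eta\,\rho/\sum_g\rho(g^{-1}\cdot)$ is Lipschitz with compact support, and $\sum_g\sigma_g\equiv1$ on $\pi_0^{-1}(\supp u)$. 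Compact support of $\sigma$ forces a finite adjacency set by proper discontinuity, with no finite-generation or finite-topological-type assumption. Note that $\sum_g\sigma_g\equiv1$ only near $\pi_0^{-1}(\supp u)$, which is all that your gradient identity and the convex-combination formula for $\chi_{\phi_n}(gy)$ require, since both are only evaluated against $\tilde u^2$. With this in place your argument is complete.
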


Here a right action of a countable group $\Gamma$ on a countable set $X$ is said to be \emph{amenable} if there exists a $\Gamma$-invariant mean on $L^\infty(X)$.
This holds if and only if the action satisfies the \emph{F\o{}lner condition}:
For any finite subset $G\subseteq\Gamma$ and $\ve>0$,
there exists a non-empty, finite subset $F\subseteq X$, a \emph{F\o{}lner set}, such that
\begin{equation}\label{folner}
  |F\setminus Fg| \le \ve |F|
\end{equation}
for all $g\in G$.
By definition, $\Gamma$ is \emph{amenable} if the right action of $\Gamma$ on itself is amenable,
and then any action of $\Gamma$ is amenable.
 
In comparison with the results of Brooks, B\'erard, and Castillon,
the main point of \cref{up} is that we do not need any assumptions on metric and topology of $M_0$.
A main new point of our arguments is that we adopt our constructions
more carefully to the different competitors for $\lambda_0$ separately.

\section{Fundamental domains and partitions of unity}
\label{secfun}

Choose a complete Riemannian metric $h$ on $M_0$.
In what follows, geodesics, distances, and metric balls in $M_0$, $M_1$, and $M$
are taken with respect to $h$ and its lifts to $M_1$ and $M$, respectively.

Fix a point $x$ in $M_0$.
For any $y\in\pi^{-1}(x)$, let
\begin{equation}
  D_y = \{ z\in M_1 \mid \text{$d(z,y)\le d(z,y')$ for all $y'\in\pi^{-1}(x)$} \}
\end{equation}
be the  \emph{fundamental domain} of $\pi$ centered at $y$.
Then $D_y$ is closed in $M_1$,
the boundary $\partial D_y$ of $D_y$ has measure zero in $M_1$,
and $\pi\colon D_y\setminus\partial D_y\to M_0\setminus C$ is an isometry,
where $C$ is a subset of the cut locus ${\rm Cut}(x)$ of $x$ in $M_0$.
Recall that ${\rm Cut}(x)$ is of measure zero.
Moreover, $M_1=\cup_{y\in\pi^{-1}(x)}D_y$, $y\in\pi^{-1}(x)$.

\begin{lem}\label{number}
For any $\rho>0$,
there is an integer $N(\rho)$ such that any $z$ in $M_1$ is contained
in at most $N(\rho)$ metric balls $B(y,\rho)$, $y\in\pi^{-1}(x)$.
\end{lem}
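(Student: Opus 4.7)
The plan is to reduce the counting problem on $M_1$ to one on the universal cover $M$, where the deck transformation group $\Gamma_0$ of $\pi_0\colon M\to M_0$ acts transitively on each fiber, and then to apply an upper semi-continuity argument on the compact closed ball $\bar B_{M_0}(x,\rho)\subset M_0$.

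First, fix a lift $\tilde z\in M$ of any given $z\in M_1$. I would show that $\pi_1$ sends $\pi_0^{-1}(x)\cap \bar B_M(\tilde z,\rho)$ \emph{onto} $\pi^{-1}(x)\cap \bar B_{M_1}(z,\rho)$: for any $y$ in the target, take a minimizing geodesic in $M_1$ from $z$ to $y$ (which exists since $M_1$ is complete with respect to the lift of $h$) and lift it to $M$ starting at $\tilde z$; the endpoint $\tilde y$ satisfies $\pi_0(\tilde y)=x$, $d_M(\tilde z,\tilde y)\le d_{M_1}(z,y)\le\rho$, and $\pi_1(\tilde y)=y$. Hence
\[
 |\pi^{-1}(x)\cap \bar B_{M_1}(z,\rho)| \le |\pi_0^{-1}(x)\cap \bar B_M(\tilde z,\rho)|.
\]

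Next, define $\phi\colon M\to \N$ by $\phi(\tilde z)=|\pi_0^{-1}(x)\cap \bar B_M(\tilde z,\rho)|$. Since $\Gamma_0$ acts on $M$ by isometries and preserves $\pi_0^{-1}(x)$, $\phi$ is $\Gamma_0$-invariant and descends to $\bar\phi\colon M_0\to\N$. If $\phi(\tilde z)>0$, some $\tilde y\in\pi_0^{-1}(x)$ has $d_M(\tilde z,\tilde y)\le\rho$, so $d_{M_0}(\pi_0(\tilde z),x)\le\rho$; thus the support of $\bar\phi$ lies in the compact ball $\bar B_{M_0}(x,\rho)$. Each value of $\phi$ is finite because $\pi_0^{-1}(x)$ is discrete in $M$ and closed balls in $M$ are compact.

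Finally, I would check upper semi-continuity of $\phi$: if $\tilde z_n\to \tilde z$, every $\tilde y\in\pi_0^{-1}(x)$ with $d_M(\tilde y,\tilde z)>\rho$ lies outside $\bar B_M(\tilde z_n,\rho)$ for all large $n$, while the remaining candidates are confined to a fixed compact set such as $\bar B_M(\tilde z,\rho+1)$, intersected with the discrete set $\pi_0^{-1}(x)$, which is finite. Upper semi-continuity passes to $\bar\phi$ via continuous lifts, so $\bar\phi$ attains a finite maximum $N(\rho)$ on its compact support, establishing the lemma.

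The main subtlety is that the counting function $z\mapsto|\pi^{-1}(x)\cap \bar B_{M_1}(z,\rho)|$ is not invariant under a transitive group action directly on $M_1$ when $\pi$ is non-normal, so symmetry-based reductions are unavailable on $M_1$ itself; passing to the universal cover $M$, where $\Gamma_0$ acts transitively on $\pi_0^{-1}(x)$, restores the needed symmetry and reduces the lemma to bounding a USC function supported on the compact set $\bar B_{M_0}(x,\rho)$.
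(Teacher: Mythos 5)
Your proof is correct, but it takes a different and somewhat more elaborate route than the paper's. The paper projects the minimal geodesics from $z$ to the various $y_i\in\pi^{-1}(x)$ down to $M_0$, observes that distinct $y_i$ give pairwise non-homotopic geodesic segments from $x$ to $\pi(z)$, and hence obtains pairwise distinct homotopy classes of loops at $x$ of length at most $2\rho$; the bound $N(\rho)$ is then the (finite) number of such classes. Your argument instead lifts the count to the universal cover $M$, where the counting function $\tilde z\mapsto|\pi_0^{-1}(x)\cap\bar B_M(\tilde z,\rho)|$ is $\Gamma_0$-invariant, descends to a function on $M_0$ supported in the compact ball $\bar B_{M_0}(x,\rho)$, and is upper semi-continuous, hence bounded. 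Both arguments ultimately rest on the same two facts --- completeness of the auxiliary metric $h$ (so closed balls are compact) and discreteness of $\pi_0^{-1}(x)$ --- and your reduction of the count on $M_1$ to a count on $M$ via lifting a minimizing geodesic is a clean and correct step. However, once you are on $M$ with a transitive $\Gamma_0$-action, the upper-semicontinuity-plus-compactness finish is heavier than necessary: if $\pi_0^{-1}(x)\cap\bar B_M(\tilde z,\rho)$ is nonempty, pick $\tilde y_0$ in it and translate by the unique element of $\Gamma_0$ sending $\tilde y_0$ to a fixed lift $\tilde x$; by the triangle inequality the count is then at most $|\pi_0^{-1}(x)\cap\bar B_M(\tilde x,2\rho)|$, a single finite number independent of $\tilde z$. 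This shortcut recovers essentially the same constant as the paper's loop-counting argument, since loops at $x$ of length $\le 2\rho$ correspond exactly to deck transformations moving $\tilde x$ a distance at most $2\rho$.
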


\begin{proof}
Let $z\in B(z_1,\rho)\cap B(y_2,\rho)$ with $y_1\ne y_2$ in $\pi^{-1}(x)$
and $\gamma_1,\gamma_2\colon[0,1]\to M_1$
be minimal geodesics from $y_1$ to $z$ and $y_2$ to $z$, respectively.
Then $\sigma_1=\pi\circ\gamma_1$ and $\sigma_2=\pi\circ\gamma_2$
are geodesic segments form $x$ to $\pi(z)$.
Since $y_1\ne y_2$, $\sigma_1$ and $\sigma_2$ are not homotopic relative to $\{0,1\}$.
Hence, if $z$ lies in in the intersection of $n$ pairwise different balls $B(y_i,\rho)$
with $y_1,\dots,y_n\in\pi^{-1}(x)$,
then the concatenations $\sigma_1^{-1}*\sigma_i$ represent $n$
pairwise different homotopy classes of loops at $x$ of length at most $2\rho$.
Hence $n$ is at most equal to the number $N(\rho)$ of homotopy classes of loops
at $x$ with representatives of length at most $2\rho$.
\end{proof}

\begin{lem}\label{coco}
If $K\subseteq M_0$ is compact, then $\pi^{-1}(K)\cap D_y$ is compact.
More precisely, if $K\subseteq B(x,r)$, then $\pi^{-1}(K)\cap D_y\subseteq B(y,r)$.
\end{lem}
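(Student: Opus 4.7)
The plan is to prove the more precise inclusion first, from which compactness follows formally. So I take a point $z\in\pi^{-1}(K)\cap D_y$, assuming $K\subseteq B(x,r)$, and aim to show $z\in B(y,r)$.

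Since $\pi(z)\in K\subseteq B(x,r)$, we have $d(x,\pi(z))<r$. Because $h$ was chosen to be a complete Riemannian metric on $M_0$, the Hopf--Rinow theorem guarantees a minimizing geodesic $\sigma\colon[0,1]\to M_0$ from $x$ to $\pi(z)$ of length $d(x,\pi(z))<r$. I would then lift $\sigma$ backwards, i.e.\ lift the reverse geodesic starting at $z$, to obtain a path $\gamma\colon[0,1]\to M_1$ from $z$ to some preimage $y'\in\pi^{-1}(x)$. Since $\pi$ is a local isometry, the length of $\gamma$ equals the length of $\sigma$, so $d(z,y')<r$. Now I invoke the defining property of the fundamental domain: since $z\in D_y$, we have $d(z,y)\le d(z,y')<r$, which proves the second assertion.

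For compactness, note that since the covering $\pi\colon M_1\to M_0$ is a local isometry and $h$ is complete on $M_0$, the lifted metric on $M_1$ is complete as well (any Cauchy sequence in $M_1$ projects to a Cauchy sequence in $M_0$, whose limit can be lifted locally). By Hopf--Rinow on $M_1$, the closed metric ball $\overline{B(y,r)}$ is compact. Since $K$ is compact, it is contained in some ball $B(x,r)$, and then by the first part of the argument $\pi^{-1}(K)\cap D_y\subseteq B(y,r)\subseteq\overline{B(y,r)}$. As $\pi^{-1}(K)$ is closed (preimage of a closed set under a continuous map) and $D_y$ is closed by definition, the intersection $\pi^{-1}(K)\cap D_y$ is a closed subset of the compact set $\overline{B(y,r)}$, hence compact.

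There is no real obstacle here; the only subtleties to watch are the completeness of the lifted metric (needed both to lift geodesics in the first part and to apply Hopf--Rinow in the second part) and the correct use of the defining inequality $d(z,y)\le d(z,y')$ for $z\in D_y$ with $y'$ an \emph{arbitrary} preimage of $x$. The rest is a direct verification.
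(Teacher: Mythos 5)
Your proof is correct and follows essentially the same route as the paper: lift a minimal geodesic from $\pi(z)$ to $x$ (reversing orientation, which is immaterial), observe the lift ends at some $y'\in\pi^{-1}(x)$ with $d(z,y')<r$, and apply the defining inequality $d(z,y)\le d(z,y')$ of the fundamental domain. You additionally spell out the completeness of the lifted metric on $M_1$ and the Hopf--Rinow/closedness argument for compactness, which the paper leaves implicit after proving the inclusion; these are correct and a reasonable thing to make explicit.
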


\begin{proof}
Choose $r>0$ such that $K\subseteq B(x,r)$.
Let $z\in\pi^{-1}(K)\cap D_y$ and $\gamma_0$ be a minimal geodesic from $\pi(z)\in K$ to $x$.
Let $\gamma$ be the lift of $\gamma_0$ to $M_1$ starting in $z$.
Then $\gamma$ is a minimal geodesic from $z$ to some point $y'\in\pi^{-1}(x)$.
Since $z\in D_y$, this implies 
\begin{equation*}
  d(z,y) \le d(z,y') \le L(\gamma) = L(\gamma_0) < r.
\end{equation*}
Hence $\pi^{-1}(K)\cap D_y\subseteq B(y,r)$.
\end{proof}

Let $K\subseteq M_0$ be a compact subset and choose $r>0$ such that $K\subseteq B(x,r)$.
Let $\psi\colon\R\to\R$ be the function which is equal to $1$ on $(-\infty,r]$, to $t+1-r$ for $r\le t\le r+1$, and to $0$ on $[r+1,\infty]$.
For $y\in\pi^{-1}(x)$, let $\psi_y=\psi_y(z)=\psi(d(z,y))$.
Note that $\psi_y=1$ on $\pi^{-1}(K)\cap D_y$ and that $\supp\psi_y=\bar B(y,r+1)$.

\begin{lem}\label{number2}
Any $z$ in $M_1$ is contained in the support of at most $N(r+1)$
of the functions $\psi_y$, $y\in\pi^{-1}(x)$.
\end{lem}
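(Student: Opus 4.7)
The plan is to reduce the statement directly to the argument already carried out for \cref{number}.

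First I would unpack what membership in $\supp\psi_y$ means. Since $\psi$ is supported in $(-\infty,r+1]$, we have $\supp\psi_y=\bar B(y,r+1)$, as was already observed in the paragraph preceding the lemma. Hence $z\in\supp\psi_y$ if and only if $d(z,y)\le r+1$, and the task reduces to bounding the number of $y\in\pi^{-1}(x)$ with $d(z,y)\le r+1$ by $N(r+1)$.

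At this point I would simply rerun the proof of \cref{number} with $\rho=r+1$. The only tiny subtlety is that \cref{number} is phrased in terms of the open balls $B(y,\rho)$, while we are now looking at points in the closed ball of the same radius. However, an inspection of that proof shows this makes no difference: if $y_1,\dots,y_n\in\pi^{-1}(x)$ are pairwise distinct with $d(z,y_i)\le r+1$, and $\gamma_i$ is a minimal geodesic from $y_i$ to $z$, then the concatenations $(\pi\circ\gamma_1)^{-1}\ast(\pi\circ\gamma_i)$ represent $n$ pairwise distinct homotopy classes of loops at $x$, each of length at most $2(r+1)$. By the very definition of $N(r+1)$ in the proof of \cref{number} (homotopy classes of loops at $x$ with representatives of length at most $2(r+1)$), this forces $n\le N(r+1)$.

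I do not foresee any real obstacle; the lemma is essentially a restatement of \cref{number} transported from open balls to the supports of the cutoffs $\psi_y$. The only point requiring care is the open/closed ball distinction, and, as noted, the argument of \cref{number} yields both versions with the same constant $N(r+1)$.
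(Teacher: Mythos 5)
Your proof is correct and follows essentially the same route as the paper's: the paper simply cites \cref{number} and remarks that $\supp\psi_y\subseteq B(y,r+1)$. You are right to flag the open/closed ball mismatch (the paper writes $\supp\psi_y=\bar B(y,r+1)$ but then invokes \cref{number} as if for open balls), and your inspection correctly shows the discrepancy is harmless, since $N(\rho)$ counts homotopy classes with representatives of length \emph{at most} $2\rho$, a closed condition that accommodates $d(z,y_i)\le\rho$ just as well as $d(z,y_i)<\rho$.
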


\begin{proof}
This is clear from \cref{number}
since $\supp\psi_y$ is contained in the ball $B(y,r+1)$.
\end{proof}

In particular,
each point of $M_1$ lies in the support of only finitely many of the functions $\psi_y$. 
Therefore the function $\psi_1=\max\{1-\sum\psi_y,0\}$ is well defined.
By \cref{coco}, we have $\supp\psi_1\cap\pi^{-1}(K)=\emptyset$.
Together with $\psi_1$, the functions $\psi_y$ lead to a partition of unity on $M_1$ with functions $\vf_1$ and $\vf_y$, $y\in\pi^{-1}(x)$, given by
\begin{equation}\label{partu}
  \vf_1 = \frac{\psi_1}{\psi_1+\sum_{z\in\pi^{-1}(x)}\psi_z}
  \quad\text{and}\quad
  \vf_y = \frac{\psi_y}{\psi_1+\sum_{z\in\pi^{-1}(x)}\psi_{z}}.
\end{equation}
Note that $\supp\vf_1=\supp\psi_1$ and $\supp\vf_y=\supp\psi_y$ for all $y\in\pi^{-1}(x)$.

\begin{lem}\label{unilip}
The functions $\vf_y$, $y\in\pi^{-1}(x)$,
are Lipschitz continuous with Lipschitz constant $3N(r+1)$.
\end{lem}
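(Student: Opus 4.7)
The plan is to prove this by the standard quotient-rule trick for Lipschitz functions, after identifying the Lipschitz constants of the numerator $\psi_y$ and of the denominator $D := \psi_1 + \sum_{z\in\pi^{-1}(x)}\psi_z$, together with a useful lower bound on $D$.

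First I would observe that the cut-off $\psi\colon\R\to\R$ is $1$-Lipschitz (it is linear with slope $0$ or $1$ on the three pieces of its definition), and since $d(\cdot,y)$ is $1$-Lipschitz on $M_1$, each $\psi_y=\psi\circ d(\cdot,y)$ is $1$-Lipschitz. Next I would show that $D=\max\{1,\sum_{z\in\pi^{-1}(x)}\psi_z\}$: indeed, if $\sum\psi_z\ge1$ then $\psi_1=0$, and if $\sum\psi_z<1$ then $\psi_1=1-\sum\psi_z$, so in either case $D=\max\{1,\sum\psi_z\}\ge1$. To bound the Lipschitz constant of $\sum_z\psi_z$, I would use \cref{number2}: at every point at most $N(r+1)$ of the summands $\psi_z$ have nonzero value, and each $\psi_z$ is $1$-Lipschitz with bounded support in $\bar B(z,r+1)$; hence on almost every point of $M_1$ the (weak) gradient of the sum is a sum of at most $N(r+1)$ vectors of norm at most $1$, so $\sum_z\psi_z$ is $N(r+1)$-Lipschitz. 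Taking maximum with the constant $1$ preserves this constant, so $D$ is $N(r+1)$-Lipschitz.

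Now I would estimate, for $p,q\in M_1$, using the identity
\begin{equation*}
\psi_y(p)D(q)-\psi_y(q)D(p) = \bigl(\psi_y(p)-\psi_y(q)\bigr)D(q)+\psi_y(q)\bigl(D(q)-D(p)\bigr),
\end{equation*}
that
\begin{equation*}
|\vf_y(p)-\vf_y(q)|
\le \frac{|\psi_y(p)-\psi_y(q)|}{D(p)}
+\frac{\psi_y(q)}{D(q)}\cdot\frac{|D(p)-D(q)|}{D(p)}.
\end{equation*}
Since $\psi_y\le D$ and $D\ge1$, both $\psi_y(q)/D(q)\le 1$ and $1/D(p)\le 1$, whence the right-hand side is at most $(1+N(r+1))d(p,q)$. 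Since $N(r+1)\ge1$ (there is at least the trivial loop), this is bounded by $3N(r+1)d(p,q)$, which is the claimed Lipschitz constant.

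There is no serious obstacle here; the one mildly subtle point is getting the Lipschitz constant of $\sum_z\psi_z$ to be $N(r+1)$ rather than $2N(r+1)$ (which one might write down if one naively counts the supports containing $p$ and the supports containing $q$ separately). The pointwise gradient argument, legitimised by Rademacher's theorem on the smooth manifold $M_1$, is the cleanest way to obtain the sharper bound, but for the stated constant $3N(r+1)$ even the cruder counting suffices.
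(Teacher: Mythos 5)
Your proof is correct and follows the same quotient-rule strategy as the paper, with two small refinements worth noting. First, you observe that the denominator satisfies $D=\psi_1+\sum_z\psi_z=\max\{1,\sum_z\psi_z\}$, which makes $D$ Lipschitz with constant $N(r{+}1)$; the paper instead bounds $\psi_1$ and $\sum_z\psi_z$ separately by $N(r{+}1)$ each, giving $2N(r{+}1)$ for the denominator. Second, you split the cross-multiplied difference so that the factor $\psi_y(q)/D(q)\le 1$ appears (using $\psi_y\le D$), whereas the paper uses $\psi_y\le 1$ together with $\chi\le N(r{+}1)$. The net effect is that you actually obtain the sharper Lipschitz constant $1+N(r{+}1)$, which of course implies the claimed $3N(r{+}1)$; this is a pleasant tightening, but it is the same route, not a different one.
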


\begin{proof}
The functions $\psi_y$, $y\in\pi^{-1}(x)$, are Lipschitz continuous with Lipschitz constant $1$
and take values in $[0,1]$.
Hence $\psi_1$ is Lipschitz continuous with Lipschitz constant $N=N(r+1)$, by \cref{number2},
and takes values in $[0,1]$.
Therefore the denominator $\chi=\psi_1+\sum_{z\in\pi^{-1}(x)}\psi_{z}$ in the fraction defining the $\vf_y$ is Lipschitz continuous and takes values in $[1,N]$.
Hence
\begin{align*}
  |\vf_y(z_1) - \vf_y(z_2)|
  &\le  \frac{|(\chi(z_2)-\chi(z_1))\psi_y(z_1)
  + \chi(z_1)(\psi_y(z_1)-\psi_y(z_2))|}{\chi(z_1)\chi(z_2)} \\
  &\le \frac{(2N+N) d(z_1,z_2)}{\chi(z_1)\chi(z_2)}
  \le 3N d(z_1,z_2).
  \qedhere
\end{align*}
\end{proof}

As a consequence of \cref{unilip},
we get that $\vf_1=1-\sum\vf_y$ is also Lipschitz continuous with Lipschitz constant $6N(r+1)^2$.

\section{Pulling up}
\label{secpull}

Let $f$ be a non-vanishing Lipschitz continuous function on $M_0$ with compact support
and let $f_1=f\circ\pi$.
We will construct a cutoff function $\chi$ on $M_1$ such that $R(\chi f_1)$ is close to $R(f)$.

Let $g$ be the given Riemannian metric on $M$
and $h$ be a complete background Riemannian metric on $M$ as in \cref{secfun}.
Then there is a constant $A\ge1$ such that
\begin{equation}\label{a}
  A^{-1}g \le h \le Ag
\end{equation}
on the support of $f$.
We continue to take distances and metric balls in $M_0$, $M_1$, and $M$
with respect to $h$ and its respective lifts to $M_1$ and $M$.

Fix a point $x$ in $M_0$.
With $K=\supp f$ and $r>0$ such that $K\subseteq B(x,r)$,
we get a partition of unity with functions $\vf_1$ and $\vf_y$, $y\in\pi^{-1}(x)$, as above.

Fix preimages $u\in M$ and $y=\pi_1(u)\in M_1$ of $x$ under $\pi_0$ and $\pi$, respectively.
Write $\pi_0^{-1}(x)=\Gamma_0u$ as the union of $\Gamma_1$-orbits $\Gamma_1gu$,
where $g$ runs through a set $R$ of representatives of the right cosets of $\Gamma_1$ in $\Gamma_0$, that is, of the elements of $\Gamma_1\backslash\Gamma_0$.
Then $\pi^{-1}(x)=\{\pi_1(gu)\mid g\in R\}$.
Let
\begin{equation*}
\begin{split}
  S &= \{ s\in R \mid d(y,\pi_1(su)) \le 2r+2 \} \\
  &= \{ s\in R \mid \text{$d(u,tsu)) \le 2r+2$ for some $t\in\Gamma_1$}\}, \\
  T &= \{ t \in \Gamma_1 \mid \text{$d(u,t gu)\le2r+2$ for some $s\in S$}\}, \\
  G &= TS \subseteq \Gamma_0.
\end{split}
\end{equation*}
Since the fibres of $\pi$ and $\pi_0$ are discrete,
$S$ and $T$ are finite subsets of $\Gamma_0$, hence also $G$.

Let $\ve>0$ and $F\subseteq\Gamma_1\backslash\Gamma_0$ be a F\o{}lner set
for $G$ and $\ve$ satisfying \eqref{folner}.
Let
\begin{equation*}
  P = \{ g \in R \mid \Gamma_1g \in F \} \subseteq R
\end{equation*}
and set
\begin{equation*}
  \chi = \sum_{g\in P}\vf_{\pi_1(gu)}.
\end{equation*}
Since $|P|=|F|<\infty$, $\supp\chi$ is compact.
Hence, by \cref{unilip},
$\chi f_1$ is compactly supported and Lipschitz continuous on $M_1$.
Let
\begin{equation*}
  Q = \{ y \in \pi^{-1}(x) \mid \text{$(\chi f_1)(z)\ne0$ for some $z\in D_y$} \}.
\end{equation*}
To estimate the Rayleigh quotient of $\chi f_1$,
it suffices to consider $\chi f_1$ on the union of the $D_y$, $y\in Q$.
We first observe that
\begin{equation*}
  P_1 = \{ \pi_1(gu) \mid g \in P \} \subseteq Q.
\end{equation*}
To show this, let $y=\pi_1(gu)$ and observe that $f_1$ does not vanish identically
 on $\pi^{-1}(K)\cap D_y$ and that $\vf_y$ is positive on $\pi^{-1}(K)\cap D_y$.
Since $R$ is a set of representatives of the right cosets of $\Gamma_1$ in $\Gamma_0$,
there exists a one-to-one correspondence between $P$ and $P_1$, and hence
\begin{equation*}
  |P| = |P_1| \le |Q|. 
\end{equation*}
The problematic subset of $Q$ is
\begin{equation*}
  Q_- = \{ y \in Q \mid \text{$0<\chi(z)<1$ for some $z\in\pi^{-1}(K)\cap D_y$}\}.
\end{equation*}
Let now $y\in Q_-$ and $z\in\pi^{-1}(K)\cap D_y$ with $0<\chi(z)<1$.
Since $\pi_1(gu)$, $g\in R$, runs through all points of $\pi^{-1}(x)$,
we have $\sum_{g\in R} \vf_{\pi_1(gu)}(z)=1$.
Hence there are $g_1,\dots,g_k\in R\setminus P$ such that  $\vf_{\pi_1(g_iu)}(z)\ne0$ and
\begin{equation*}
  \chi(z) + \sum \vf_{\pi_1(g_iu)}(z) =1.
\end{equation*}
Furthermore, there has to be a $g\in P$ with $\vf_{\pi_1(gu)}(z)\ne0$.
Then the supports of the functions $\vf_{\pi_1(gu)}$ and $\vf_{\pi_1(g_iu)}$ intersect
and we get $d(\pi_1(gu),\pi_1(g_iu))\le2r+2$.
That is, we have $d(gu,h_ig_iu)\le2r+2$ for some $h_i\in\Gamma_1$.
We conclude that
\begin{align*}
  d(u,g^{-1}h_ig_iu)) = d(gu,h_ig_iu) \le 2r+2.
\end{align*}
Since $\pi_1$ is distance non-increasing,
we get that there are $s_i\in S$ and $t_i\in T$ such that $g^{-1}h_ig_i=t_is_i$,
and then $h_ig_i=gt_is_i$.
Since $g_i\notin P$,
we conclude that $\Gamma_1gt_is_i\notin F$, i.e., $\Gamma_1 g \in F \setminus F(t_is_i)^{-1}$.
Since $(t_is_i)^{-1}\in G$, there are at most $\ve|F||G|$ such elements $g\in P$.
Since $d(y,z)\le r$ and $d(z,\pi_1(gu))\le r+1$,
we conclude with \cref{number} that for fixed $g \in P$ there are at most $N(2r+1)$ such $y\in Q$.
We conclude that
\begin{equation}\label{qmiq}
\begin{split}
  |Q_-| &\le \ve |F||G|N(2r+1) \\
  &= \ve |P||G|N(2r+1) \le \ve |Q||G|N(2r+1).
\end{split}
\end{equation}
We now estimate the Rayleigh quotient of $\chi f_1$.
For any $y\in Q_+=Q\setminus Q_-$,
we have $\chi=1$ on $\pi^{-1}(K)\cap D_y$ and therefore
\begin{equation*}
\begin{split}
  \int_{D_y} \{|\nabla(\chi f_1)|^2 + V_1(\chi f_1)^2\}
  &= \int_{D_y} \{|\nabla f_1|^2 + V_1f_1^2\} \\
  &= \int_{M_0} \{ |\nabla f|^2 + Vf^2 \}
\end{split}
\end{equation*}
and
\begin{equation*}
  \int_{D_y} \chi^2f_1^2 = \int_{D_y} f_1^2 = \int_{M_0} f^2,
\end{equation*}
where, here and below,
integrals, gradients, and norms are taken with respect to the original Riemannian metric $g$ on $M$.

For any $y\in Q_-$, we have
\begin{equation*}
  \int_{D_y} \chi^2f_1^2 \le \int_{M_0} f^2
  \quad\text{and}\quad
  \int_{D_y} |V_1|\chi^2f_1^2 \le C_0 \int_{M_0} f^2,
\end{equation*}
where $C_0$ is the maximum of $|V_0|$ on $\supp f=K$.
By \cref{number2}, \cref{unilip}, and \eqref{a},
we have $|\nabla\chi|^2\le 9N(r+1)^4A$.
Therefore
\begin{align*}
  \int_{D_y} |\nabla(\chi f_1)|^2
  &\le 2\int_{D_y}\{|\nabla\chi|^2 f^2 + \chi^2|\nabla f\circ\pi|^2|\} \\
  &\le 18N(r+1)^4A \int_{M_0} f^2 + 2\int_{M_0} |\nabla f|^2.
\end{align*}
In conclusion,
\begin{equation*}
  \int_{D_y} \{ |\nabla(\chi f_1)|^2 + |V_1|\chi^2f_1^2\}  \le C
\end{equation*}
for any $y\in Q_-$,
where $C>0$ is an appropriate constant, which depends on $f$,
but not on $y$ or the choice of $\ve$ and $F$.
With $D=|G|N(2r+1)$, we obtain from \eqref{qmiq} that
\begin{equation*}
  |Q_-| \le \frac{\ve D}{1-\ve D}|Q_+|,
\end{equation*}
and conclude that
\begin{align*}
  R(\chi f_1)
  &= \frac{\int \{ |\nabla(\chi  f_1)|^2 + V_1\chi^2f_1^2 \}}{\int (\chi f_1)^2} \\
  &= \frac{\sum_{y\in Q} \int_{D_y} \{ |\nabla f_1|^2 + V_1f_1^2 \}}
  {\sum_{y\in Q} \int_{D_y} f_1^2} \\
  &\le \frac{\sum_{y\in Q_+} \int_{D_y} \{ |\nabla f_1|^2 + V_1f_1^2 \}
  + \ve CD |Q_+|/(1-\ve D) }{\sum_{y\in Q_+} \int_{D_y} f_1^2} \\
  &= \frac{\int_{M_0} \{ |\nabla f|^2 + Vf^2 \} + \ve CD/(1-\ve D) }{\int_{M_0} f^2} \\
  &= R(f) + \frac{\ve CD}{(1-\ve D)\int_{M_0} f^2}.
\end{align*}
For $\ve\to0$, the right hand side converges to $R(f)$.

\begin{proof}[Proof of \cref{up}]
By \cref{down}, we have $\lambda_0(M_0,V_0)\le\lambda_0(M_1,V_1)$.
By \eqref{rayl}, the bottom of the spectrum of Schr\"odinger operators is given by the infimum of corresponding Rayleigh quotients $R(f)$ of Lipschitz continuous functions with compact support.
The arguments above show that, for any such function $f$ on $M_0$ and any $\delta>0$,
there is a Lipschitz continuous function $\chi f_1$ on $M_1$ with compact support and Rayleigh quotient at most $R(f)+\delta$.
Therefore we also have $\lambda_0(M_0,V_0)\ge\lambda_0(M_1,V_1)$.
\end{proof}

\section{Pushing down}
\label{secpush}

Let $f$ be a Lipschitz continuous function on $M_1$ with compact support.
Define the \emph{push down} $f_0\colon M_0\to\R$ of $f$ by
\begin{equation*}
  f_0(x) = \big( \sum_{y\in\pi^{-1}(x)}f(y)^2 \big)^{1/2}.
\end{equation*}
Since $\supp f$ is compact, the sum on the right hand side is finite for all $x\in M_0$,
and hence $f_0$ is well defined.
We have $\supp f_0=\pi(\supp f)$, and hence $\supp f_0$ is compact.
Furthermore, $f_0$ is differentiable at each point $x$, where $f$ is differentiable at all $y\in\pi^{-1}(x)$ and $f(y)\ne0$ for some $y\in\pi^{-1}(x)$,  and then
\begin{equation*}
  \nabla f_0(x) = \frac{1}{f_0(x)}\sum_{y\in\pi^{-1}(x)}f(y)\pi_*(\nabla f(y)).
\end{equation*}
For the norm of the differential of $f_0$ at $x$, we get
\begin{align*}
  |\nabla f_0(x)|^2
  &\le \frac{1}{f_0(x)^2}\bigg|\sum_{y\in\pi^{-1}(x)}f(y)\pi_*(\nabla f(y)) \bigg|^2\\
  &\le \frac{1}{f_0(x)^2} \sum_{y\in\pi^{-1}(x)}f(y)^2\sum_{y\in\pi^{-1}(x)}|\nabla f(y)|^2 \\
  &= \sum_{y\in\pi^{-1}(x)}|\nabla f(y)|^2.
\end{align*}
Furthermore, $f_0$ is differentiable with vanishing differential at almost any point of $\{f_0=0\}$.  
Therefore $f_0$ is Lipschitz continuous and
\begin{equation*}
  \int_{M_0} f_0^2 = \int_{M_1} f^2,
  \quad
  \int_{M_0} V_0f_0^2 = \int_{M_1} V_1f^2,
  \quad
  \int_{M_0} |\nabla f_0|^2 \le \int_{M_1} |\nabla f|^2.
\end{equation*}
In particular, we have $R(f_0)\le R(f)$.

\begin{proof}[Proof of \cref{down}]
For any non-vanishing Lipschitz continuous function $f$ on $M_1$ with compact support,
the push down $f_0$ as above is a Lipschitz continuous function on $M_0$ with compact support and Rayleigh quotient $R(f_0)\le R(f)$.
The asserted inequality follows now from the characterization of the bottom of the spectrum by Rayleigh quotients as in \eqref{rayl}.
\end{proof}

\section{Final remarks}

It is well-known that any countable group is the fundamental group of a smooth $4$-manifold.
(A variant of the usual argument for finitely presented groups,
taking connected sums of $S^1 \times S^3$ and performing surgeries,
can be used to produce $5$-manifolds with fundamental group any countable group.)
In particular, for a non-finitely generated, amenable group $G$,
e.g., $G=\bigoplus_{n \in \N} \Z$ or $G=\Q$,
there is a smooth manifold $M$ with $\pi_1(M)\cong G.$
In contrast to the results in \cite{BC, Br2}, our main result also applies to such examples.

Moreover, we do not assume $\lambda_0(M_0,V_0)>-\infty.$
Given any non-compact manifold $M_0$, it is indeed easy to construct a smooth potential $V_0$
such that $\lambda_0(M_0,V_0)=-\infty.$
In fact, it suffices that $V_0(x)$ tends to $-\infty$ sufficiently fast as $x\to\infty$.

\newpage



\begin{thebibliography}{HMZ2}


\bibitem{BC}
P.\,B\'erard and P.\,Castillon,
Spectral positivity and Riemannian coverings. 
\emph{Bull. Lond. Math. Soc.} {\bf 45} (2013), no. 5, 1041--1048.

\bibitem{Br1}
R.\,Brooks,
The fundamental group and the spectrum of the Laplacian.
\emph{Comment. Math. Helv.} {\bf 56} (1981), no. 4, 581--598.

\bibitem{Br2}
R.\,Brooks,
The bottom of the spectrum of a Riemannian covering.
\emph{J. Reine Angew. Math.} {\bf 357} (1985), 101--114.

\bibitem{CY}
S.\,Y.\,Cheng and S.\,T.\,Yau,
Differential equations on Riemannian manifolds and their geometric applications.
\emph{Comm. Pure Appl. Math.} {\bf 28} (1975), no. 3, 333--354.

\bibitem{FS}
D.\,Fischer-Colbrie and R.\,Schoen,
The structure of complete stable minimal surfaces in 3-manifolds of nonnegative scalar curvature.
\emph{Comm. Pure Appl. Math.} {\bf 33} (1980), no. 2, 199--211.
 
\bibitem{Su}
D.\,Sullivan,
Related aspects of positivity in Riemannian geometry.
\emph{J. Differential Geom.} {\bf 25} (1987), no. 3, 327--351,


\end{thebibliography}
\end{document}